\documentclass[11pt]{article}

\usepackage[T1]{fontenc}
\usepackage[utf8]{inputenc}
\usepackage{lmodern}
\usepackage[margin=1in]{geometry}
\usepackage{amsmath,amssymb,amsthm,mathtools}
\usepackage{microtype}
\usepackage{booktabs,array}
\usepackage{needspace}
\usepackage[numbers,sort&compress]{natbib}
\usepackage[hidelinks]{hyperref}
\usepackage{url}
\hypersetup{
  pdftitle={Sharp Bounds for Graph-Codes Avoiding Complete Bipartite Differences},
  pdfsubject={Extremal densities and linear constructions for complete bipartite graph-codes},
  pdfkeywords={graph-codes, complete bipartite graphs, BCH codes, forbidden intersections}
}

\newtheorem{theorem}{Theorem}[section]
\newtheorem{lemma}[theorem]{Lemma}
\newtheorem{proposition}[theorem]{Proposition}
\newtheorem{corollary}[theorem]{Corollary}
\theoremstyle{definition}

\newcommand{\F}{\mathbb F}
\newcommand{\G}{\mathcal G}
\newcommand{\C}{\mathcal C}
\newcommand{\E}{\mathbb E}
\newcommand{\rank}{\operatorname{rank}}
\newcommand{\lin}{\mathrm{lin}}
\numberwithin{equation}{section}
\title{Sharp Bounds for Graph-Codes Avoiding\\Complete Bipartite Differences}
\author{{\bf Nader H. Bshouty}\\ Faculty of Computer Science\\  Technion Israel Institute of Technology\\ bshouty@cs.technion.ac.il}
\date{}

\begin{document}
\maketitle
\vspace{-1.5em}

\begin{abstract}
Let $d_{K_{s,t}}(n)$ be the largest fraction of all graphs on
$[n]$ in a family containing no two distinct members whose
symmetric difference is a copy of $K_{s,t}$. We determine
this maximum density for every fixed pair of positive integers
$s,t$. If both part sizes are odd, it is known that the density is exactly
$1/2$. Otherwise, let $2h$ be the largest even
part size. We prove
\[
d_{K_{s,t}}(n)=\Theta_{s,t}(n^{-h}).
\]
For $h>1$, this improves the general $O_{s,t}(1/n)$ upper bound
obtained from Alon's two-copy gluing theorem~\cite{Alon2024}.
The sharp bounds for even stars were already proved by Alon
and are recovered as special cases.

The upper bound uses the Frankl--F\"uredi
forbidden-intersection theorem. For the lower bound,
we construct binary linear codes using BCH codes.
When exactly one part size is even, we assign to
each edge the sum of the binary vectors assigned
to its endpoints. When both part sizes are even,
we identify these vectors with elements of a
finite field and assign to each edge the product
of its endpoint elements. In both cases, every
copy of $K_{s,t}$ has a nonzero syndrome.

\end{abstract}

\section{Introduction}

In classical binary error-correcting codes, any two distinct codewords must differ in sufficiently many coordinates. In graph-codes, each coordinate represents a possible edge of a graph. Instead of requiring a minimum number of differences, we forbid certain graphs from appearing as the difference between two codewords. In this paper, we study families of graphs in which the symmetric difference of any two distinct members is not exactly a complete bipartite graph $K_{s,t}$. Our goal is to determine the maximum possible size of such a family.

All graphs in this paper are finite, simple, and undirected. Write
$[n]=\{1,\ldots,n\}$, and let $\G_n$ be the set of all graphs with vertex
set $[n]$. There are $2^{\binom n2}$ such graphs. The \emph{symmetric
difference} $G\triangle G'$ is the graph whose edges belong to exactly
one of $G$ and $G'$. For disjoint nonempty sets $A,B\subseteq[n]$, write
$K_{A,B}$ for the graph whose edge set consists of all pairs with one
endpoint in $A$ and the other in $B$. In particular, vertices outside
$A\cup B$ are isolated in $K_{A,B}$.

Fix positive integers $s,t$. A \emph{$K_{s,t}$-code} is a family
$\C\subseteq\G_n$ such that
\[
 G\triangle G'\ne K_{A,B}
\]
for every two distinct $G,G'\in\C$ and every pair of disjoint sets
$A,B\subseteq[n]$ with $|A|=s$ and $|B|=t$. We study the maximum density
\[
 d_{K_{s,t}}(n)
 =\max\left\{\frac{|\C|}{2^{\binom n2}}:
              \C\text{ is a }K_{s,t}\text{-code}\right\}.
\]
This is Alon's graph-code density parameter~\cite{Alon2024},
specialized to complete bipartite forbidden differences.

Identifying a graph with its binary edge-indicator vector identifies
$\G_n$ with the vector space $\F_2^{\binom n2}$. Vector addition is then
symmetric difference. A graph-code is \emph{binary linear} if it is a
subspace of this vector space. Let $d^{\lin}_{K_{s,t}}(n)$ denote the
maximum density among binary linear $K_{s,t}$-codes. Since every linear
code is also a code without the linearity restriction,
\[
 d^{\lin}_{K_{s,t}}(n)\le d_{K_{s,t}}(n).
\]

\subsection{Main result}

The maximum density $d_{K_{s,t}}(n)$ depends on whether $s$ and $t$ are odd or even. When both are odd, the maximum density is exactly $1/2$~\cite{Alon2024,Versteegen2025}. Otherwise, the density decreases polynomially with $n$. In this case, the exponent of $n$ is half the largest even part size. In particular, the size of an odd part does not affect the exponent.

\begin{theorem}[Complete bipartite classification]\label{thm:main}
Fix positive integers $s,t$. If both $s$ and $t$ are odd,
then
\[
 d^{\lin}_{K_{s,t}}(n)=d_{K_{s,t}}(n)=\frac12.
\]
If at least one of $s,t$ is even, let $2h$ be the largest even integer
among $s,t$. There are positive constants $c_{s,t},C_{s,t}$ such that,
for all sufficiently large $n$,
\[
 c_{s,t}n^{-h}
 \le d^{\lin}_{K_{s,t}}(n)
 \le d_{K_{s,t}}(n)
 \le C_{s,t}n^{-h}.
\]
Equivalently, the classification for arbitrary graph-codes is
\[
 d_{K_{s,t}}(n)=
 \begin{cases}
  \dfrac12,
       & s,t\text{ both odd},\\[1.5mm]
  \Theta_{s,t}(n^{-s/2}),
       & s\text{ even and }t\text{ odd},\\[1.5mm]
  \Theta_{s,t}(n^{-t/2}),
       & s\text{ odd and }t\text{ even},\\[1.5mm]
  \Theta_{s,t}(n^{-\max\{s,t\}/2}),
       & s,t\text{ both even}.
 \end{cases}
\]
\end{theorem}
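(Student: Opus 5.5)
The proof splits into three parts: the odd–odd case, the upper bound $C_{s,t}n^{-h}$, and the linear lower bound $c_{s,t}n^{-h}$.

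\textbf{Odd–odd case.} When $s,t$ are both odd, $K_{s,t}$ has $st$ edges, an odd number. The family of all graphs with an even number of edges is then a linear code of density $1/2$. It contains no two members whose difference is $K_{s,t}$, since that difference would have odd size. For the matching upper bound, take a fixed copy $K$ of $K_{s,t}$. The pairs $\{G,G\triangle K\}$ partition $\G_n$, and a code contains at most one graph from each pair. Hence the density is at most $1/2$.

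\textbf{Upper bound.} Assume some part is even, and say $2h$ is the size of the larger even part. Call that part $B$, so $|B|=2h$, and let the other part $A$ have $|A|=a$.

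First reduce to the smallest forbidden difference. Fix $A$ together with a fixed set $R$ of auxiliary vertices, and restrict attention to how $G$ behaves on edges between $A$ and the remaining vertices. By averaging over the graph on all other edges, a dense code yields a dense family of ``neighbourhood patterns.'' Each vertex $v\notin A\cup R$ is assigned a vector in $\F_2^{A}$. The crucial point is that two codewords differing exactly by $K_{A,B}$ correspond to patterns that differ by the all-ones vector on exactly $2h$ coordinates $v\in B$.

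It is cleaner to run this on a family of $2h$-subsets. Fix a large set $V$ of $m=\Theta(n)$ vertices and consider the union of the $K_{A,\{v\}}$, $v\in V$. Every subset $S\subseteq V$ gives a graph $K_{A,S}$. For a fixed background graph $G_0$, the set
\[
\mathcal S=\{S\subseteq V: G_0\triangle K_{A,S}\in\C\}
\]
contains no two sets with symmetric difference of size exactly $2h$. Averaging over $G_0$ shows that the density of $\C$ is at most the maximum density of a family of subsets of $[m]$ with no two members at symmetric-difference distance exactly $2h$.

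Now bound that family. Within each layer $\binom{[m]}{k}$, distance $2h$ means the intersection size is exactly $k-h$. The Frankl–Füredi forbidden-intersection theorem, or Frankl–Rödl for the middle layers that carry almost all the mass, bounds the layer density by $O(m^{-h})$. The range $|k-m/2|\le \sqrt{m}\log m$ carries all but negligible mass, so summing over layers gives $O(n^{-h})$. An alternative route works directly with codes avoiding the exact distance $2h$ in the Hamming cube: their density is $O(m^{-h})$ by the same theorem applied layer by layer.

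The odd part (or the smaller even part) plays no role here beyond being fixed.

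\textbf{Lower bound.} Let $r=\lceil\log_2(n+1)\rceil$ and build a code with about $hr$ parity checks, so its density is $2^{-O(hr)}=\Omega(n^{-h})$.

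\emph{Exactly one part even,} say $|B|=2h$ and $|A|$ odd. Assign each vertex $v$ a column $x_v\in\F_2^{hr}$ taken from the parity-check matrix of a binary BCH code of designed distance $2h+1$. Any $2h$ of the $x_v$ are then linearly independent, including all sums of distinct vectors. Give edge $uv$ the syndrome $x_u+x_v$, and let $\C$ be the kernel of the map $G\mapsto\sum_{uv\in G}(x_u+x_v)$. The syndrome of $K_{A,B}$ is
\[
|B|\sum_{a\in A}x_a+|A|\sum_{b\in B}x_b=\sum_{b\in B}x_b,
\]
using that $|B|$ is even and $|A|$ is odd. This is nonzero because it is a sum of $2h$ distinct columns. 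Note that $\C$ then contains no two members differing by $K_{A,B}$, whichever side the even part sits on, since the roles of $s$ and $t$ are symmetric.

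\emph{Both parts even,} $|A|=2h'\le 2h=|B|$. Identify the $x_v$ with distinct elements of $\F_{2^r}$, and give edge $uv$ the vector of values $(x_ux_v)^{2j-1}$ for $j=1,\dots,h$; that is, the product $x_ux_v$ pushed through the BCH check map. Using that odd powers multiply, the syndrome of $K_{A,B}$ is
\[
\Bigl(\sum_{a\in A}x_a^{2j-1}\Bigr)\Bigl(\sum_{b\in B}x_b^{2j-1}\Bigr),\qquad j=1,\dots,h.
\]
If $0$ is excluded from the labels, the BCH property with $h$ odd power sums forces the $B$-factor to be nonzero for some $j$. The difficulty is that the $A$-factor may vanish for that same $j$.

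\textbf{Main obstacle.} The key step is to ensure that both factors are nonzero at a common index $j$. My first attempt is to use the elementary symmetric polynomials: the power sums $p_1,p_3,\dots,p_{2h-1}$ of a set of size at most $2h$ over characteristic $2$ determine the set. So if all the products vanish, the sets of indices where $p_j(A)=0$ and where $p_j(B)=0$ together cover $\{1,\dots,h\}$. I would then try either to add $O(r)$ extra coordinates (for example the edge values $x_u+x_v$ as well) or to use Newton's identities to show that this covering is impossible. Failing that, I would enlarge the syndrome to all products $p_i(A)p_j(B)$ with $i,j\le h$. This costs $h^2r$ checks, which gives the wrong exponent, so the refined argument is what is really needed here.
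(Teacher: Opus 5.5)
Your odd--odd case and your construction for exactly one even part are correct and match the paper. There are two genuine gaps.

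\textbf{Both parts even (lower bound).} You leave this case unfinished, and the coordinatewise construction you propose does not just lack a proof; it can fail. Take $h=2$ and $r$ even. Let $A=\{a,a'\}$ with $x_{a'}=\omega x_a$, where $\omega\in\F_{2^r}$ is a primitive cube root of unity, so that $p_3(A)=0$. Let the labels of $B$ be $\{c,c+\alpha,c+\beta,c+\alpha+\beta\}$ with $c\notin\mathrm{span}(\alpha,\beta)$. Then $p_1(B)=0$ and $p_3(B)=\alpha\beta(\alpha+\beta)$, so both syndrome coordinates $p_1(A)p_1(B)$ and $p_3(A)p_3(B)$ vanish.

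The missing idea is to multiply in one large field rather than coordinatewise in $\F_{2^r}^h$. Take the whole BCH vector $b_i\in\F_2^{hr}$, fix an $\F_2$-linear isomorphism $\phi:\F_2^{hr}\to\F_{2^{hr}}$, put $z_i=\phi(b_i)$, and label edge $ij$ by $z_iz_j$. Then $\sigma(K_{A,B})=(\sum_{i\in A}z_i)(\sum_{j\in B}z_j)$. Each factor is $\phi$ of a BCH sum over an even set of size between $2$ and $2h$, so it is nonzero. A product of nonzero field elements is nonzero. This uses exactly $hr$ checks, and no Newton identities or extra coordinates are needed; it is what the paper does.

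\textbf{Upper bound.} Your reduction to families of subsets of $V$ with no two at symmetric-difference distance exactly $2h$ is fine. However, the theorems you cite do not justify the bound on such families. Frankl--F\"uredi bounds a $k$-uniform family avoiding intersection $k-h$ by $O_k(m^{k-h})$. Its constant depends on $k$, and the theorem needs $m$ large compared with $k$, so it says nothing about layers with $k\approx m/2$. Frankl--R\"odl needs the forbidden distance to be linear in $m$, whereas here it is the fixed value $2h$.

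The missing idea is to average over translates against a test family of \emph{fixed} uniformity $2h-1$. Fix the other part $R$ and take $\mathcal T=\{K_{R,A}:A\in\binom{[n]\setminus R}{2h-1}\}$. Equivalently, in your cube, use the $(2h-1)$-subsets of $V$ and the group structure of $\F_2^V$. If $|A\cap B|=h-1$, then $|A\triangle B|=2h$, so $K_{R,A}\triangle K_{R,B}=K_{R,A\triangle B}$ is forbidden. Hence every translate of a code meets $\mathcal T$ in a $(2h-1)$-uniform family avoiding intersection $h-1$. Frankl--F\"uredi bounds such a family by $C_hm^{h-1}$. Dividing by $|\mathcal T|=\binom{m}{2h-1}$ gives $O(n^{-h})$ directly, with no layer decomposition.
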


All asymptotic statements take $n\to\infty$ with $s,t$ fixed.
If $n<s+t$, no copy of $K_{s,t}$ exists on $[n]$, so both densities are
one. We therefore focus on $n\ge s+t$.

The theorem also determines the number of independent binary parity
checks needed by an optimal linear code, up to an additive constant.
The \emph{codimension} of a subspace $\C\subseteq\G_n$ is
$\binom n2-\dim_{\F_2}\C$. A subspace of codimension $r$ has density
$2^{-r}$. Let $r_{s,t}(n)$ be the minimum codimension of a binary linear
$K_{s,t}$-code.

\begin{corollary}[Optimal linear codimension]\label{cor:codimension}
Fix positive integers $s,t$. If both part sizes
are odd, then $r_{s,t}(n)=1$. Otherwise, let $2h$ be the largest even
part size. Then
\[
 r_{s,t}(n)\le h\left\lceil\log_2 n\right\rceil,
\]
and, as $n\to\infty$,
\[
 r_{s,t}(n)=h\log_2 n+O_{s,t}(1).
\]
\end{corollary}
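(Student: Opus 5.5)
The corollary follows from Theorem~\ref{thm:main} once we supply one explicit construction for the first inequality. I treat the two parity cases separately.

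\textbf{Both parts odd.} A linear code of codimension $0$ is all of $\G_n$, and for $n\ge s+t$ it contains two graphs differing by a copy of $K_{s,t}$. Hence $r_{s,t}(n)\ge 1$. For the matching upper bound, take the even-size code $\{G: |E(G)|\equiv 0 \pmod 2\}$. It has codimension $1$, and every $K_{A,B}$ with $|A|=s$, $|B|=t$ has $st$ edges, which is odd, so no difference of two codewords is such a copy. Thus $r_{s,t}(n)=1$.

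\textbf{Some part even.} Write $m=\lceil\log_2 n\rceil$. I would prove $r_{s,t}(n)\le hm$ by exhibiting a parity-check map $H:\F_2^{\binom n2}\to\F_2^{hm}$ whose kernel is a $K_{s,t}$-code, following the BCH constructions sketched in the abstract. Inject $[n]$ into $\F_{2^m}$ via $v\mapsto x_v$.
\begin{itemize}
\item[(i)] \emph{Exactly one part even, say $s=2h$ with $t$ odd.} Give edge $uv$ the syndrome $\bigl((x_u+x_v)^{2j-1}\bigr)_{j=1}^{h}$, using binary-BCH odd powers, for $hm$ bits in total. Summing over $K_{A,B}$: each $a\in A$ appears $t$ times (odd) and each $b\in B$ appears $s$ times (even). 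In characteristic $2$, a coefficient-wise sum of $x_a+x_b$ therefore collapses to $\sum_{a\in A}x_a$. For powers this collapse fails, so I would instead label edges by the Frobenius-compatible vector $\bigl(x_u^{2j-1}+x_v^{2j-1}\bigr)_{j\le h}$. The sum over the copy is then $\bigl(\sum_{a\in A}x_a^{2j-1}\bigr)_{j\le h}$, a sum over $2h$ distinct elements. By the BCH distance bound (designed distance $2h+1$), this vector is nonzero.
\item[(ii)] \emph{Both parts even, $2h=\max(s,t)$.} Label edge $uv$ by $\bigl((x_ux_v)^{2j-1}\bigr)_{j\le h}$. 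Summing over $K_{A,B}$ gives $\bigl(p_{2j-1}(A)\,p_{2j-1}(B)\bigr)_{j}$, where $p_k(S)=\sum_{a\in S}x_a^k$.
\end{itemize}

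The main obstacle is case (ii), which requires some $j\le h$ with $p_{2j-1}(A)\ne 0$ and $p_{2j-1}(B)\ne 0$ simultaneously. Individual nonvanishing of each vector follows from BCH, since $|A|,|B|\le 2h$, but a common index is not guaranteed. I would first try to use Newton's identities in characteristic $2$, which relate the odd power sums to the elementary symmetric functions, to show that the zero patterns cannot be complementary. If that fails, I would fall back to a slightly larger label at a cost of $O(1)$ extra bits, which still suffices for the asymptotic statement. The exact bound $hm$ would then rest on the paper's construction behind the lower bound in Theorem~\ref{thm:main}.

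\textbf{Asymptotics.} Theorem~\ref{thm:main} gives $d_{K_{s,t}}(n)\le C_{s,t}n^{-h}$. Since $2^{-r_{s,t}(n)}=d^{\lin}_{K_{s,t}}(n)\le C_{s,t}n^{-h}$, we get
\[
r_{s,t}(n)\ge h\log_2 n-\log_2 C_{s,t}.
\]
Combined with $r_{s,t}(n)\le h\lceil\log_2 n\rceil\le h\log_2 n+h$, this yields $r_{s,t}(n)=h\log_2 n+O_{s,t}(1)$.
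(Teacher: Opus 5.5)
Your odd--odd case, your case (i), and your lower bound $r_{s,t}(n)\ge h\log_2 n-\log_2 C_{s,t}$ are correct and match the paper. Your case-(i) label $\bigl(x_u^{2j-1}+x_v^{2j-1}\bigr)_{j\le h}$ is exactly $b_u+b_v$ from Lemma~\ref{lem:BCH}. One small point there: when $n=2^m$ your injection must use $0$, whose BCH column is zero. So either apply the primitive BCH bound to the remaining $2h-1$ nonzero labels, or use the paper's Vandermonde argument with the all-ones row, which uses that $|A|$ is even.

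\textbf{The gap is case (ii).} This is exactly where the bound $r_{s,t}(n)\le h\lceil\log_2 n\rceil$ needs a new idea. The coordinatewise product in $\F_{2^m}^h$ fails, and no Newton-identity argument can save it: $\F_{2^m}^h$ has zero divisors, and complementary zero patterns really occur.

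\emph{Counterexample.} Take $s=2$, $t=4$ (so $h=2$), $n=16$ and $m=4$, so every element of $\F_{16}$ is a vertex label. Let $\omega\in\F_4\subset\F_{16}$ satisfy $\omega^2+\omega+1=0$, and pick $\beta\in\F_{16}\setminus\F_4$. Give $A$ the labels $1,\omega$ and give $B$ the labels $0,\omega^2,\beta,\beta+\omega^2$; these are six distinct elements. Then $p_3(A)=1+\omega^3=0$ and $p_1(B)=0$. Hence the syndrome $\bigl(p_1(A)p_1(B),\,p_3(A)p_3(B)\bigr)$ is zero, and $\varnothing$ and $K_{A,B}$ are both codewords.

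Your fallback of ``$O(1)$ extra bits'' is unspecified, and deferring to the paper's construction is not a proof. Citing the lower bound in Theorem~\ref{thm:main} would rescue the asymptotic formula $r_{s,t}(n)=h\log_2 n+O_{s,t}(1)$. However, its unspecified constant $c_{s,t}$ cannot give the explicit inequality $r_{s,t}(n)\le h\lceil\log_2 n\rceil$.

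\textbf{The missing idea} is the paper's Lemma~\ref{lem:field}: multiply in one large field rather than coordinatewise. Identify each whole vector $b_i\in\F_2^{hm}$ additively with an element $z_i\in\F_{2^{hm}}$, and label edge $ij$ by the field product $z_iz_j$. Then $\sigma(K_{A,B})=\bigl(\sum_{i\in A}z_i\bigr)\bigl(\sum_{j\in B}z_j\bigr)$. Each factor is nonzero by Lemma~\ref{lem:BCH}, because $|A|$ and $|B|$ are even and at most $2h$ and the identification is an additive injection. A product of two nonzero elements of a field is nonzero. This uses exactly $hm$ binary parity checks, so it gives $r_{s,t}(n)\le h\lceil\log_2 n\rceil$ in the both-even case with no extra bits.
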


The first bound in Corollary~\ref{cor:codimension} holds for every
$n\ge s+t$. The lower bound implicit in the
second statement follows from the upper bound for arbitrary codes in
Theorem~\ref{thm:main}.

\subsection{Previous work and our results}

The study of graph-codes was developed by Alon, Gujgiczer,
K\"orner, Milojevi\'c, and Simonyi~\cite{AlonEtAl2023}.
They studied families of graphs whose pairwise symmetric
differences satisfy prescribed properties. Alon~\cite{Alon2024}
then considered the problem of forbidding a fixed graph
$H$ as the symmetric difference of two distinct members.
He determined the maximum density, up to constant factors,
when $H$ is a star or a matching with an even number of edges.
In particular, he proved
\[
d_{K_{1,2h}}(n)=\Theta_h(n^{-h}).
\]
His proof uses the forbidden-intersection theorem of
Frankl and F\"uredi~\cite{FranklFuredi1985} for the
upper bound and BCH codes for the lower bound.

Alon's Proposition~1.5 in~\cite{Alon2024} gives an upper bound for a larger
class of graphs. To construct such a graph $H$, start with
two copies of a graph $F$ and choose an independent set
$R$ of vertices in $F$. Merge the two copies of each vertex
in $R$, leaving all other vertices distinct. Alon proved
that the resulting graph satisfies
\[
d_H(n)\le \frac{|V(H)|}{n}.
\]
Every $K_{u,2h}$ can be constructed in this way. Take two
copies of $K_{u,h}$ and merge the corresponding vertices
in their parts of size $u$. The resulting graph is
$K_{u,2h}$. Consequently,
\[
d_{K_{u,2h}}(n)\le\frac{u+2h}{n}=O_{u,h}(n^{-1}).
\]

Conlon, Lee, and Versteegen~\cite[Theorem~1.7]{ConlonLeeVersteegen} extended this upper bound to a larger class of graphs, called \emph{positive graphs}. Their theorem gives
\[
d_H(n)=O_H(n^{-1})
\]
for every positive graph $H$. In particular, it applies
to every $K_{s,t}$ with at least one even part.
Thus, their theorem applies to more graphs than Alon's
Proposition~1.5, but gives the same $O_{s,t}(n^{-1})$
bound for the complete bipartite graphs considered here.

Versteegen~\cite{Versteegen2025} studied the corresponding
problem for linear graph-codes. His Theorem~1.2 shows
that every fixed graph $H$ with an even number of edges
satisfies
\[
d_H^{\lin}(n)=O_H(1/\log n).
\]
His Theorem~1.3 gives a polynomial bound for graphs
that admit an even decomposition\footnote{An even decomposition of $H$ is a sequence
$V(H)=V_0\supset V_1\supset\cdots\supset V_k=\varnothing$
such that, for each $i$, the set
$V_{i-1}\setminus V_i$ is independent, and the number
of edges between $V_{i-1}\setminus V_i$ and $V_i$
is even.}. In particular,
every bipartite graph with an even number of edges
belongs to this class. The theorem therefore implies
\[
d_{K_{s,t}}^{\lin}(n)
=O_{s,t}\bigl(n^{-1/(s+t-2)}\bigr)
\]
whenever $st$ is even. 
For the complete bipartite graphs considered here,
however, the $O_{s,t}(n^{-1})$ upper bound for arbitrary
codes is stronger. Janzer and Yip~\cite{JanzerYip2025}
subsequently strengthened the results on the proportion
of graphs admitting an even decomposition. Their work
does not improve the density bound for a fixed
$K_{s,t}$.

Table~\ref{tab:comparison} summarizes the bounds. 

\begin{table}[htbp]
\centering
\small
\renewcommand{\arraystretch}{1.4}
\begin{tabular}{@{}p{0.24\textwidth}
                    p{0.43\textwidth}
                    p{0.25\textwidth}@{}}
\toprule
Forbidden graph &
Earlier bounds and constructions &
This paper \\
\midrule
$K_{2a+1,2b+1}, a,b\ge 0$
&
$ 1/2$~\cite{Alon2024,Versteegen2025}
&
$1/2$; recovered
\\
\addlinespace

$K_{1,2a}$, $a\ge1$
&
$\Theta_a(n^{-a})$,
Alon~\cite[Theorem~1.2]{Alon2024}
&
$\Theta_a(n^{-a})$;
recovered
\\
\addlinespace

$K_{2a,2b+1}$,
$a,b\ge1$
&
Upper: $O_{a,b}(n^{-1})$
~\cite{Alon2024,ConlonLeeVersteegen}.
&
$\Theta_{a,b}(n^{-a})$
\\
\addlinespace

$K_{2a,2b}$,
$a,b\ge1$
&
Upper: $O_{a,b}(n^{-1})$
~\cite{Alon2024,ConlonLeeVersteegen}.
&
$\Theta_{a,b}
(n^{-\max\{a,b\}})$
\\
\bottomrule
\end{tabular}
\caption{Comparison with earlier results and constructions.
The upper bounds apply to arbitrary graph-codes. The
lower bounds are achieved by linear codes.}
\label{tab:comparison}
\end{table}

\subsection{Our contribution}

Our main result determines the maximum density of
$K_{s,t}$-codes up to constant factors for every
fixed pair of positive integers $s,t$.

For the upper bound, we use essentially the same
proof that Alon~\cite{Alon2024} used for even stars.
In Alon's proof, one vertex is fixed, and the
Frankl--F\"uredi theorem is applied to stars whose
other endpoints form sets of size $2h-1$.
We make the following modification: instead of fixing
one vertex, we fix a set of $u$ vertices and
consider complete bipartite graphs $K_{R,A}$,
where $|R|=u$ and $|A|=2h-1$.
This gives
\[
d_{K_{u,2h}}(n)=O_{u,h}(n^{-h}),
\]
improving the previously known general
$O_{u,h}(n^{-1})$ bound when $h>1$.

For the lower bound, we distinguish two cases.
When exactly one part size is even, the matching
lower bound follows directly from the BCH method
used by Alon~\cite{Alon2024}. 

When both part sizes are even, the BCH construction
based on vertex degrees does not work, since every
vertex of the forbidden graph has even degree.
We instead assign elements of a finite field to
the vertices and assign to each edge the product
of its endpoint elements. The vertex elements
are chosen using BCH codes so that the sums over
the two parts of every forbidden complete
bipartite graph are nonzero. This gives
\[
d_{K_{2a,2b}}^{\lin}(n)
=\Omega_{a,b}(n^{-\max\{a,b\}}),
\]
which matches our upper bound.

\subsection{Proof overview}

We first explain the idea of the upper bound. Suppose that at least one of $s,t$ is even. Write the forbidden graph as $K_{u,2h}$, where $2h$ is an even part size and $u$ is the size of the other part. Fix a set $R\subseteq[n]$ of size $u$. For every set $A\subseteq[n]\setminus R$ of size $2h-1$, consider the complete bipartite graph $K_{R,A}$, whose two parts are $R$ and $A$.

The key observation is that if two such sets $A$ and $B$ satisfy $|A\cap B|=h-1$, then their symmetric difference $A\triangle B$ has exactly $2h$ vertices. Consequently,
\[
K_{R,A}\triangle K_{R,B}
=K_{R,A\triangle B},
\]
which is a copy of the forbidden graph $K_{u,2h}$. Therefore, a $K_{u,2h}$-code cannot contain both $K_{R,A}$ and $K_{R,B}$.

We use the Frankl--F\"uredi theorem to bound how many graphs from this special family can belong to a code. We then show that this bound also gives an upper bound on the density of the entire code. The argument uses the fact that taking the symmetric difference of every graph in a code with the same fixed graph preserves the forbidden-difference condition. This gives $O_{u,h}(n^{-h})$. When both $s$ and $t$ are even, we choose $2h$ to be the larger part size to obtain the stronger bound.

For the lower bound, we construct a linear code by assigning
a binary vector to each edge of the complete graph on $[n]$.
The vectors are chosen so that, for every copy of
$K_{s,t}$, the sum of the vectors assigned to its edges
is nonzero. All vector additions are performed over
$\mathbb F_2$.

Suppose first that the forbidden graph is $K_{u,2h}$,
where $u$ is odd. We begin by assigning a binary vector 
$b_i\in\mathbb F_2^r$ where
$r=h\lceil\log_2 n\rceil$ to each
vertex $i\in[n]$. These vectors are constructed using
BCH codes. They have the property that the sum of the
vectors assigned to any positive even number of at most
$2h$ distinct vertices is nonzero.

 We then assign the vector $b_i+b_j$ to
edge $(i,j)$. Consider a copy of $K_{u,2h}$ with parts
$A$ and $B$, where $|A|=u$ and $|B|=2h$.
When we sum the vectors assigned to its edges, each
vertex vector appears as many times as the degree of
that vertex. Since vertices in $A$ have even degree
and vertices in $B$ have odd degree, the sum is
$\sum_{j\in B}b_j$.
This sum is nonzero by the BCH property.

If both parts of $K_{s,t}$ are even, the preceding
construction does not work because every vertex has
even degree. We therefore use a different assignment
of vectors to edges. We identify the binary vectors
$b_i$ with elements $z_i$ of a finite field of
characteristic two, preserving addition. We then
assign the field element $z_i z_j$ to edge $(i,j)$.

For a complete bipartite graph with parts $A$ and $B$,
the sum of the elements assigned to its edges is
\[
\sum_{i\in A}\sum_{j\in B}z_i z_j
=
\left(\sum_{i\in A}z_i\right)
\left(\sum_{j\in B}z_j\right).
\]
Both factors are nonzero by the BCH property.
Their product is therefore nonzero.

In both cases, we obtain a linear map that assigns
to each graph the sum of the vectors assigned to its
edges. Every copy of $K_{s,t}$ has a nonzero sum.
Consequently, the set of graphs whose sum is zero
forms a linear $K_{s,t}$-code.
\section{Preliminaries}\label{sec:tools}

\subsection{Translations and averaging}
For $X\in\G_n$ and $\C\subseteq\G_n$, define
\[
 \C\triangle X=\{G\triangle X:G\in\C\}.
\]
This is called a {\it translate} of $\C$. If $\C$ is a $K_{s,t}$-code, every
translate is also a $K_{s,t}$-code, since
\[
 (G\triangle X)\triangle(G'\triangle X)=G\triangle G'.
\]

The following standard averaging lemma allows us to
obtain an upper bound on the density of a code by
bounding its intersection with a fixed family of
graphs. It is the translation-averaging form of the
vertex-transitivity argument used by
Alon~\cite[Section~2.1]{Alon2024}. We include a proof for completeness.

\begin{lemma}[Translation averaging, \cite{Alon2024}]\label{lem:averaging}
Let $\mathcal T$ be a nonempty subset of $\G_n$. Suppose that every
$K_{s,t}$-code $\mathcal D\subseteq\G_n$ satisfies
$|\mathcal D\cap\mathcal T|\le M$. Then
\[
 d_{K_{s,t}}(n)\le\frac{M}{|\mathcal T|}.
\]
\end{lemma}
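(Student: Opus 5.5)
The plan is a double-counting argument over all translates $\C\triangle X$ with $X\in\G_n$. Let $\C$ be a $K_{s,t}$-code of maximum size, so that $|\C|=d_{K_{s,t}}(n)\,2^{\binom n2}$. For each $X\in\G_n$ the translate $\C\triangle X$ is again a $K_{s,t}$-code, as noted above. The hypothesis therefore gives
\[
 |(\C\triangle X)\cap\mathcal T|\le M
 \qquad\text{for every } X\in\G_n .
\]
Summing over all $2^{\binom n2}$ choices of $X$ yields
\[
 \sum_{X\in\G_n}|(\C\triangle X)\cap\mathcal T|\le M\,2^{\binom n2}.
\]

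Next I evaluate the left-hand side by counting, in a second way, the pairs $(G,T)\in\C\times\mathcal T$ together with the translations relating them. We have $T\in\C\triangle X$ exactly when $T=G\triangle X$ for some $G\in\C$, that is, when $X=G\triangle T$. Also, for fixed $X$ the map $G\mapsto G\triangle X$ is injective. Hence $|(\C\triangle X)\cap\mathcal T|$ equals the number of pairs $(G,T)\in\C\times\mathcal T$ with $G\triangle T=X$. Summing over $X$, each pair $(G,T)$ is counted exactly once, so the left-hand side equals $|\C|\cdot|\mathcal T|$.

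Combining the two computations gives $|\C|\,|\mathcal T|\le M\,2^{\binom n2}$. Dividing by $|\mathcal T|\,2^{\binom n2}$, which is allowed because $\mathcal T$ is nonempty, gives $d_{K_{s,t}}(n)\le M/|\mathcal T|$.

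There is no real obstacle here. The only points to check are the injectivity of translation and that $G\triangle T$ ranges over $\G_n$, both of which are immediate because $\G_n$ is a group under $\triangle$. An equivalent formulation picks $X$ uniformly at random and notes that $\E|(\C\triangle X)\cap\mathcal T|=|\C|\,|\mathcal T|/2^{\binom n2}$.
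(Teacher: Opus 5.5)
Your proposal is correct and is the paper's argument: the paper takes an arbitrary code $\C$ and computes $\E\bigl[|(\C\triangle X)\cap\mathcal T|\bigr]$ for a uniformly random translate $X$, and your deterministic double count of pairs $(G,T)$ with $G\triangle T=X$ is the same computation. Whether you start from a maximum-size code or an arbitrary one makes no difference.
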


\begin{proof}
Fix a $K_{s,t}$-code $\C$, and choose $X$ uniformly from $\G_n$. For each
fixed $T\in\mathcal T$, the graph $T\triangle X$ is uniform in $\G_n$.
Thus
\[
 \Pr[T\in\C\triangle X]=\frac{|\C|}{2^{\binom n2}}.
\]
Summing these probabilities over $T\in\mathcal T$ gives
\[
 \E\bigl[|(\C\triangle X)\cap\mathcal T|\bigr]
 =|\mathcal T|\frac{|\C|}{2^{\binom n2}}\le|\mathcal T| d_{K_{s,t}}(n).
\]
Every translate $\C\triangle X$ is a $K_{s,t}$-code, so the hypothesis
bounds the random variable inside the expectation by $M$. Dividing by
$|\mathcal T|$ bounds the density of $\C$. 
\end{proof}

\subsection{A forbidden-intersection theorem}

In the proof of the upper bound, we need to bound the size of a family of sets, each containing exactly $2h-1$ elements, such that no two distinct sets have exactly $h-1$ elements in common. We use the following special case of the Frankl--F\"uredi theorem~\cite[Theorem~2.1]{FranklFuredi1985}.
The same
special case is used in Alon's star proof~\cite[Section~2.1]{Alon2024}.

For a finite set $U$, the notation $\binom Uq$ denotes the family of
its $q$-element subsets.

\begin{theorem}[Frankl--F\"uredi, specialized form~\cite{FranklFuredi1985}]\label{thm:FF}
For each integer $h\ge1$, there is a constant $C_h>0$ with the following
property. Let $U$ be a set of size $m\ge2h-1$, and let
$\mathcal A\subseteq\binom U{2h-1}$. If
\[
 |A\cap B|\ne h-1
 \qquad\text{for every distinct }A,B\in\mathcal A,
\]
then
\[
 |\mathcal A|\le C_h m^{h-1}.
\]
For $h=1$, one may take $C_1=1$.
\end{theorem}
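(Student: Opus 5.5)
Theorem~\ref{thm:FF} is a special case of the Frankl--F\"uredi theorem, so the plan is to reduce it to the general result and to treat $h=1$ directly.

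\emph{Case $h\ge2$.} Frankl and F\"uredi~\cite[Theorem~2.1]{FranklFuredi1985} consider a family of $k$-subsets of an $m$-set with no two members meeting in exactly $\ell$ elements. They show that if $k\ge 2\ell+2$, the family has size $O_k(m^{\max\{\ell,\,k-\ell-1\}})$. We apply this with $k=2h-1$ and $\ell=h-1$. The hypothesis holds, since $2h-1\ge 2(h-1)+2=2h$ fails only by one, so we should double-check which regime applies. In fact $k-\ell-1=h-1=\ell$, so both candidate exponents equal $h-1$. This is the boundary case $k=2\ell+1$, where their bound $O_k(m^{\ell})$ is still valid; this is exactly the case Alon quotes for even stars. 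The main point to verify is therefore that the cited theorem covers $k=2\ell+1$ with exponent $\ell=h-1$. If it does, we set $C_h$ equal to the implied constant, enlarged if necessary so that the bound also holds for the finitely many small $m\ge 2h-1$ (for those $m$, trivially $|\mathcal A|\le\binom{m}{2h-1}$).

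\emph{Fallback for $h\ge2$.} If only the regime $k\ge 2\ell+2$ is available in the cited form, I would use a delta-system argument instead. Take $\mathcal A$ with no intersection of size $h-1$. By the Erd\H{o}s--Rado sunflower lemma combined with the Frankl--F\"uredi kernel method, for a suitable subfamily each set $A$ has a kernel of size at most $h-2$, or a kernel of size at least $h$ that is contained only in it. Counting the $(h-1)$-subsets that are "owned" by sets then gives $|\mathcal A|\le C\binom{m}{h-1}$. This is the step I expect to be the main obstacle, and I would lean on the citation rather than redo it.

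\emph{Case $h=1$.} Here $\mathcal A$ consists of singletons, and the condition says that distinct members have intersection of size other than $0$. But distinct singletons are disjoint, so $|\mathcal A|\le1=C_1m^{0}$.
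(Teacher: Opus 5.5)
Your route matches the paper's: for $h\ge2$ you cite Frankl--F\"uredi with set size $2h-1$ and forbidden intersection $h-1$, so the exponent is $\max\{h-1,h-1\}=h-1$, and for $h=1$ you note that distinct singletons are disjoint. Your worry about the boundary case comes from misremembering the hypothesis. The restriction $k\ge2\ell+2$ belongs to their exact bound $\binom{m-\ell-1}{k-\ell-1}$ for large $m$, whereas the $O(m^{\max\{\ell,k-\ell-1\}})$ bound holds for all $0\le\ell<k$. The paper makes this immediate by using the interval form (Alon's Theorem~2.2) with $\ell=2h-1$ and $\ell_1=\ell_2=h-1$, where the forbidden intersection sizes are exactly $\{h-1\}$; so the case $k=2\ell+1$ needs no separate check, and the rather vague delta-system fallback is unnecessary.
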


We explain how this special case follows from the
Frankl--F\"uredi theorem~\cite{FranklFuredi1985},
using the formulation given in
Alon~\cite[Theorem~2.2]{Alon2024}.
For $h\ge2$, apply that theorem with the parameters
$\ell=2h-1$ and $\ell_1=\ell_2=h-1$.
The theorem applies to families of sets of size
$\ell$ in which the intersection of any two distinct
sets has size either at least $\ell-\ell_1$
or strictly less than $\ell_2$.

With our choice of parameters, these conditions become
$|A\cap B|\ge h$
or
$|A\cap B|<h-1$.
Since the intersection size is an integer, this is
equivalent to requiring that
$|A\cap B|\ne h-1$.
The Frankl--F\"uredi theorem therefore gives
\[
|\mathcal A|
=O_h\bigl(m^{\max\{\ell_1,\ell_2\}}\bigr)
=O_h(m^{h-1}).
\]

For $h=1$, every set in $\mathcal A$ has exactly
one element. Any two distinct such sets have
intersection size zero, which is forbidden because
$h-1=0$. Therefore, $\mathcal A$ contains at most
one set, as required.

\subsection{Linear parity checks}

The lower bounds will be obtained by assigning a fixed binary vector
to each possible edge. The sum of the labels on the edges of a graph
is its \emph{syndrome}. Fix labels $c_{ij}\in\F_2^r$ for
$1\le i<j\le n$, and define
\[
 \sigma(G)=\sum_{(i,j)\in E(G)}c_{ij}.
\]
Here $E(G)$ is the edge set of $G$, and the sum is over $\F_2$. An edge
that occurs in both $G$ and $G'$ contributes twice and cancels, so
$\sigma(G\triangle G')=\sigma(G)+\sigma(G')$. In particular, $\sigma$
is linear in the edge-indicator vector of $G$.

The following lemma is the syndrome construction
used by Alon~\cite[Section~2.2]{Alon2024},
specialized to complete bipartite graphs.
We include a proof for completeness.
\begin{lemma}[Syndrome construction~\cite{Alon2024}]\label{lem:syndrome}
Suppose the labels satisfy $\sigma(K_{A,B})\ne0$ for every pair of
disjoint sets $A,B\subseteq[n]$ with $|A|=s$ and $|B|=t$. Then
$\ker\sigma$ is a binary linear $K_{s,t}$-code of codimension at most
$r$. Its density is at least $2^{-r}$.
\end{lemma}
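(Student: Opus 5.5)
The plan is to view $\sigma$ as an $\F_2$-linear map $\G_n=\F_2^{\binom n2}\to\F_2^r$ and to read off all three claims (code property, codimension, density) from elementary linear algebra.

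First, $\ker\sigma$ is a subspace of $\G_n$ by linearity of $\sigma$, established just above via $\sigma(G\triangle G')=\sigma(G)+\sigma(G')$. It is therefore a binary linear graph-code.

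Next, we check the $K_{s,t}$-code property. Take distinct $G,G'\in\ker\sigma$ and suppose $G\triangle G'=K_{A,B}$ with $A,B$ disjoint, $|A|=s$, $|B|=t$. Then
\[
\sigma(K_{A,B})=\sigma(G\triangle G')=\sigma(G)+\sigma(G')=0+0=0,
\]
which contradicts the hypothesis $\sigma(K_{A,B})\ne0$. Hence no such pair exists, and $\ker\sigma$ is a $K_{s,t}$-code.

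Finally, for codimension and density, rank--nullity gives
\[
\dim\ker\sigma=\tbinom n2-\rank\sigma\ge\tbinom n2-r,
\]
since the image of $\sigma$ lies in $\F_2^r$. So the codimension is at most $r$. The density is
\[
\frac{|\ker\sigma|}{2^{\binom n2}}=2^{-\rank\sigma}\ge 2^{-r}.
\]

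The argument is routine throughout. The only point needing care is that the codimension equals $\rank\sigma$, which may be strictly less than $r$, so the statement is correctly phrased as ``at most $r$''.
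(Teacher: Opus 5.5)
Your proposal is correct and matches the paper's proof essentially step for step. Linearity of $\sigma$ makes $\ker\sigma$ a subspace and forces $\sigma(G\triangle G')=0$ for kernel elements, so no difference can be a $K_{A,B}$; rank--nullity then gives codimension $\rank\sigma\le r$ and density $2^{-\rank\sigma}\ge 2^{-r}$.
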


\begin{proof}
The kernel is a subspace. If $G,G'\in\ker\sigma$, linearity gives
$\sigma(G\triangle G')=0$. The assumed nonzero syndrome of every
$K_{A,B}$ therefore prevents $G\triangle G'$ from being a forbidden
difference. By rank--nullity, the codimension of the kernel is
$\rank\sigma\le r$. Its density is consequently
$2^{-\rank\sigma}\ge2^{-r}$.
\end{proof}

We will also use Lemma~\ref{lem:syndrome} when the edge labels lie in
$\F_{2^r}$. This causes no change: the additive group of $\F_{2^r}$ is
an $r$-dimensional vector space over $\F_2$. Choosing a binary basis
turns each field-valued syndrome into $r$ binary parity checks.

\section{The upper bound for arbitrary graph-codes}\label{sec:upper}
In this section we prove the upper bounds. 
\subsection{At least one part is even}
We now use Theorem~\ref{thm:FF} to prove an upper
bound on the density of a $K_{u,2h}$-code.
Lemma~\ref{lem:averaging} will turn the resulting intersection bound
into a density bound.

The following proves the upper bound of Theorem~\ref{thm:main}
\begin{proposition}[An even part gives an upper bound]\label{prop:upper}
Fix integers $u,h\ge1$. Then
\[
 d_{K_{u,2h}}(n)=O_{u,h}(n^{-h}).
\]
More precisely, with the constant $C_h$ from Theorem~\ref{thm:FF}, for
$n\ge u+2h$,
\[
 d_{K_{u,2h}}(n)
 \le \frac{C_h (n-u)^{h-1}}{\binom {n-u}{2h-1}}.
\]
\end{proposition}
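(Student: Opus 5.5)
We combine Lemma~\ref{lem:averaging} with Theorem~\ref{thm:FF}, using the family of complete bipartite graphs described in the proof overview.

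\emph{Choice of test family.} Fix $R=\{1,\ldots,u\}$ and set $U=[n]\setminus R$, so that $m=|U|=n-u\ge 2h-1$. Take
\[
 \mathcal T=\Bigl\{K_{R,A}: A\in\binom U{2h-1}\Bigr\}.
\]
Distinct sets $A$ give distinct graphs. Indeed, $R$ is nonempty, so $A$ is recovered from $K_{R,A}$ as the set of vertices of $U$ with positive degree. Hence $|\mathcal T|=\binom{n-u}{2h-1}$.

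\emph{Bounding $|\mathcal D\cap\mathcal T|$.} Let $\mathcal D$ be any $K_{u,2h}$-code, and let $\mathcal A$ be the family of sets $A$ with $K_{R,A}\in\mathcal D$. Suppose $A,B\in\mathcal A$ are distinct with $|A\cap B|=h-1$. Then
\[
 |A\triangle B|=2(2h-1)-2(h-1)=2h.
\]
An edge $\{r,x\}$ with $r\in R$ lies in exactly one of $K_{R,A}$ and $K_{R,B}$ precisely when $x\in A\triangle B$. Also, no edge inside $U$ or inside $R$ appears in either graph. Therefore
\[
 K_{R,A}\triangle K_{R,B}=K_{R,A\triangle B},
\]
with $|R|=u$ and $|A\triangle B|=2h$. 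This is a forbidden difference, which contradicts the code property of $\mathcal D$. So $\mathcal A\subseteq\binom U{2h-1}$ has no two distinct members meeting in exactly $h-1$ elements. Theorem~\ref{thm:FF} then gives
\[
 |\mathcal D\cap\mathcal T|=|\mathcal A|\le C_h(n-u)^{h-1}.
\]

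\emph{Conclusion.} Lemma~\ref{lem:averaging} with $M=C_h(n-u)^{h-1}$ yields the displayed bound
\[
 d_{K_{u,2h}}(n)\le \frac{C_h(n-u)^{h-1}}{\binom{n-u}{2h-1}}.
\]
For the asymptotic form, note that for $n\ge u+2h$ and fixed $u,h$,
\[
 \binom{n-u}{2h-1}\ge c_{u,h}\,n^{2h-1}
\]
for some constant $c_{u,h}>0$; one can use $(n-u-2h+2)/n\ge 1/(u+2h)$, for example. The ratio is therefore at most a constant times $n^{h-1-(2h-1)}=n^{-h}$.

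\emph{Main obstacle.} The only step needing care is the symmetric-difference identity. It relies on both graphs sharing the same side $R$ and on $A,B$ avoiding $R$, which guarantees that the difference is exactly a copy of $K_{u,2h}$ with $A\triangle B$ disjoint from $R$. Everything else is a direct invocation of the earlier results.
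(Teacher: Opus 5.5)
Your proposal is correct and follows the paper's proof essentially step for step: the same test family $\mathcal T=\{K_{R,A}\}$, the same identity $K_{R,A}\triangle K_{R,B}=K_{R,A\triangle B}$ feeding Theorem~\ref{thm:FF}, and Lemma~\ref{lem:averaging} to turn the intersection bound into the density bound. Your explicit lower bound $\binom{n-u}{2h-1}\ge c_{u,h}n^{2h-1}$ and your check that $m\ge 2h-1$ spell out details the paper leaves implicit.
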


\begin{proof}
Fix a set $R\subseteq[n]$ of size $u$, put $U=[n]\setminus R$, and let
$m:=|U|=n-u$. Consider
\[
 \mathcal T=\left\{K_{R,A}:A\in\binom U{2h-1}\right\}.
\]
Since $R$ is nonempty, distinct sets $A$ give distinct graphs  $K_{R,A}$, so
$|\mathcal T|=\binom m{2h-1}$.

For any $K_{u,2h}$-code $\mathcal D$, the family
\[
 \mathcal A=\left\{A\in\binom U{2h-1}:K_{R,A}\in\mathcal D\right\}
\]
contains no pair with intersection size $h-1$. Indeed, such a pair
would satisfy $|A\triangle B|=2(2h-1)-2(h-1)=2h$, and hence
\[
 K_{R,A}\triangle K_{R,B}=K_{R,A\triangle B}
\]
would be a forbidden $K_{u,2h}$. By Theorem~\ref{thm:FF},
\[
 |\mathcal D\cap\mathcal T|=|\mathcal A|\le C_hm^{h-1}.
\]
Lemma~\ref{lem:averaging} therefore gives
\[
 d_{K_{u,2h}}(n)
 \le\frac{C_hm^{h-1}}{\binom m{2h-1}}.
\]
For fixed $h$, the denominator has order $m^{2h-1}$. Since
$m=n-u=\Theta_u(n)$, this proves the stated asymptotic bound.
\end{proof}

When both parts are even, we apply Proposition~\ref{prop:upper} with
the larger part as the part of size $2h$. This proves the upper bound
in Theorem~\ref{thm:main} whenever at least one part is even. We next
construct linear codes with matching density.

\subsection{Both parts odd}

When both $s$ and $t$ are odd, the equality
$d_{K_{s,t}}(n)=1/2$ is already known; see
Versteegen~\cite{Versteegen2025} and
Alon~\cite{Alon2026}.
We include a short proof for completeness.

\begin{proposition}[Both parts odd]
\label{prop:odd-upper}
Let $s,t\ge1$ be odd integers. Then, for every
$n\ge s+t$,
\[
d_{K_{s,t}}(n)=\frac12.
\]
\end{proposition}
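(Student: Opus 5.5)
For $s,t$ odd I will prove both inequalities, $d_{K_{s,t}}(n)\ge\tfrac12$ and $d_{K_{s,t}}(n)\le\tfrac12$.

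\emph{Lower bound.} I will use Lemma~\ref{lem:syndrome} with $r=1$ and every edge label equal to $c_{ij}=1\in\F_2$. Then $\sigma(G)$ is the parity of $|E(G)|$, and $\sigma(K_{A,B})=st\bmod 2=1\ne0$ because $s$ and $t$ are odd. So the even-size graphs form a binary linear $K_{s,t}$-code of density exactly $1/2$. This also gives $d^{\lin}_{K_{s,t}}(n)=1/2$, and hence $r_{s,t}(n)=1$ once the upper bound is in place.

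\emph{Upper bound.} I will apply Lemma~\ref{lem:averaging} with a two-element family $\mathcal T=\{\varnothing,K_{A,B}\}$, where $A,B$ are fixed disjoint sets with $|A|=s$ and $|B|=t$; these exist because $n\ge s+t$. A $K_{s,t}$-code cannot contain both members, since their symmetric difference is $K_{A,B}$. So $M=1$ and $|\mathcal T|=2$, which gives $d_{K_{s,t}}(n)\le 1/2$.

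This upper bound does not use the parities of $s,t$. Indeed, $d\le 1/2$ holds for every $K_{s,t}$ with $n\ge s+t$, and oddness is needed only for the matching construction. The only step needing care is checking that the translation-averaging lemma really allows such a small $\mathcal T$. It does: the lemma only needs $\mathcal T$ nonempty together with the intersection bound, and both hold here. I expect no real obstacle.
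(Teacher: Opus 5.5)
Your proof is correct and is essentially the paper's argument. Applying Lemma~\ref{lem:averaging} to $\mathcal T=\{\varnothing,K_{A,B}\}$ is the paper's pairing argument in packaged form: the translates of $\mathcal T$ are exactly the pairs $\{G,G\triangle K_{A,B}\}$. Your lower bound via Lemma~\ref{lem:syndrome} with all labels equal to $1$ is the same even-edge family that the paper uses.
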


\begin{proof}
We first prove the upper bound.

Fix two disjoint sets $A,B\subseteq[n]$ with
$|A|=s$ and $|B|=t$, and let $H=K_{A,B}$.
The map
$G\longmapsto G\triangle H$
partitions all graphs on $[n]$ into disjoint pairs
$\{G,G\triangle H\}$. The two graphs in each pair
are distinct, and their symmetric difference is $H$. Therefore, a $K_{s,t}$-code can contain at most one
graph from each pair. Since there are
$2^{\binom n2-1}$ such pairs, every $K_{s,t}$-code
$\mathcal C$ satisfies
$|\mathcal C|\le 2^{\binom n2-1}$.
Consequently,
\[
d_{K_{s,t}}(n)\le\frac12.
\]

For the matching lower bound, consider the family
of all graphs on $[n]$ having an even number of edges.
Since $s$ and $t$ are odd, every copy of $K_{s,t}$
has an odd number $st$ of edges. However, the
symmetric difference of two graphs with an even
number of edges always has an even number of edges.
Thus, this family is a $K_{s,t}$-code.
Exactly half of all graphs on $[n]$ have an even
number of edges. Therefore,
\[
d_{K_{s,t}}(n)\ge\frac12.
\]
Combining the two bounds proves the proposition.
\end{proof}

\section{Linear constructions}\label{sec:lower}

We first assign a binary vector $b_i$ to each vertex
$i\in[n]$. These vectors are chosen so that the sum
of the vectors assigned to any positive even number
of at most $2h$ distinct vertices is nonzero.

We then prove a lemma that allows us to view these
binary vectors as elements of a finite field and
multiply them. Finally, we use these results to
construct linear $K_{s,t}$-codes. We give one
construction when exactly one part is even and
another when both parts are even.

\subsection{Vertex labels that detect even subsets}

In our constructions, we only need binary vectors
$b_1,\ldots,b_n$ such that the sum of the vectors
assigned to any positive even number of at most
$2h$ distinct vertices is nonzero.
We use a modification of the standard BCH construction
\cite[Chapter~5]{HuffmanPless2003}, which was also used
by Alon~\cite[Section~2.2]{Alon2024}.
Since we only require nonzero sums for even-sized
subsets, we can allow zero as one of the field elements.
This reduces the number of binary coordinates from\footnote{This refinement does not affect our main result.
Both bounds give the same asymptotic density
$\Theta_{s,t}(n^{-h})$. The improvement only reduces
the number of binary coordinates by $h$ when $n$
is a power of two.}
$h\lceil\log_2(n+1)\rceil$ to
$h\lceil\log_2 n\rceil$.

\begin{lemma}[Even-subset labels]\label{lem:BCH}
Let $n\ge2$ and $h\ge1$ be integers, and put
$m=\lceil\log_2 n\rceil$. There exist vectors
$b_1,\ldots,b_n\in\F_2^{hm}$ such that
\[
 \sum_{i\in S}b_i\ne0
\]
whenever $S\subseteq[n]$ has positive even size at most $2h$.
\end{lemma}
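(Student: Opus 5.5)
We will use a BCH-type construction over the field $\F_{2^m}$ with $m=\lceil\log_2 n\rceil$. Since $2^m\ge n$, we can choose $n$ distinct elements $\alpha_1,\ldots,\alpha_n\in\F_{2^m}$. Here, unlike the classical construction, we allow one of them to be $0$. Set
\[
 b_i=(\alpha_i,\alpha_i^3,\alpha_i^5,\ldots,\alpha_i^{2h-1})\in\F_{2^m}^h .
\]
Fixing an $\F_2$-basis of $\F_{2^m}$ identifies $\F_{2^m}^h$ with $\F_2^{hm}$, and addition is preserved. It therefore suffices to show that no nonempty even set $S$ with $|S|\le 2h$ has $\sum_{i\in S}\alpha_i^{k}=0$ for all odd $k\le 2h-1$.

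Suppose, for contradiction, that such an $S$ exists, and write $p_k=\sum_{i\in S}\alpha_i^k$. In characteristic two, $p_{2k}=p_k^2$. Hence the vanishing of the odd power sums $p_1,p_3,\ldots,p_{2h-1}$ forces $p_k=0$ for every $1\le k\le 2h$, by induction on the $2$-adic valuation of $k$.

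Let $e_j$ denote the elementary symmetric polynomials of $\{\alpha_i:i\in S\}$, and put $q=|S|\le 2h$. Newton's identities $p_k-e_1p_{k-1}+\cdots+(-1)^{k-1}e_{k-1}p_1+(-1)^k k e_k=0$, applied for $k=1,\ldots,q$, reduce to $k e_k=0$. This gives $e_k=0$ for every odd $k\le q$. It yields nothing for even $k$, and this gap is the main obstacle. The identity for $k=q$ (even) is vacuous, so we cannot conclude that all $e_k$ vanish, which would have made $\prod_{i\in S}(x-\alpha_i)=x^q$ and contradicted distinctness.

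To get past this, we use the polynomial $f(x)=\prod_{i\in S}(x-\alpha_i)=\sum_j e_j x^{q-j}$. Since $q$ is even and $e_j=0$ for odd $j$, only even powers of $x$ survive, so $f(x)=g(x)^2$ for some polynomial $g$ over $\F_{2^m}$, using that squaring is surjective on $\F_{2^m}$. Then every root of $f$ has even multiplicity. But $f$ has $q\ge 2$ distinct simple roots, which is a contradiction.

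This argument is also where allowing $\alpha_i=0$ is harmless. Zero is a legitimate simple root of $f$, and we never divide by the $\alpha_i$; we only need them distinct. (For odd $|S|$, the choice $\alpha_i=0$ could make a singleton sum vanish, which is why the lemma is restricted to even sets.)
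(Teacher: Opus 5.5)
Your proof is correct. It uses the same construction as the paper and the same first step: the identity $p_{2k}=p_k^2$ makes all power sums $p_1,\ldots,p_{2h}$ vanish. The two arguments diverge after that.

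\textbf{The paper's finish.} The paper stays with power sums. It notes that $p_0=|S|=0$ in characteristic two because $|S|=v$ is even. The $v\times v$ Vandermonde matrix with rows $x^0,\ldots,x^{v-1}$ would then annihilate the all-ones vector, contradicting its nonsingularity.

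\textbf{Your finish.} You pass to the locator polynomial $f(x)=\prod_{i\in S}(x-\alpha_i)$. Newton's identities kill the odd elementary symmetric functions. So $f$, which has even degree, is a polynomial in $x^2$, hence a square, contradicting that its roots are distinct.

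\textbf{Comparison.} The paper's route is shorter and purely linear-algebraic. It isolates the role of evenness in the single fact $p_0=0$.

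Your route is the classical BCH error-locator viewpoint. It explains more structurally why even sets are the obstruction: the vanishing of the odd power sums forces the locator polynomial of an even-size set to be a perfect square. It also shows directly why allowing $\alpha_i=0$ is harmless, since $0$ is just another simple root. The cost is invoking Newton's identities and the surjectivity of squaring on $\mathbb{F}_{2^m}$.

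The surjectivity step can be avoided. If $f$ involves only even powers of $x$, then $f'=0$ in characteristic two, and any root of $f$ is already a repeated root. So your argument, like the paper's, works over any field of characteristic two.
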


\begin{proof}
Choose distinct elements $x_1,\ldots,x_n$ of $\F_{2^m}$, allowing
zero, and define
\[
 b_i=(x_i,x_i^3,\ldots,x_i^{2h-1})\in \F_{2^m}^h.
\]
Suppose that an even set $S$ of size $v$, where $2\le v\le2h$, has
zero label sum. For $j\ge1$, put
 $p_j=\sum_{i\in S}x_i^j$.
The assumed zero sum gives $p_j=0$ for every odd $j\le2h-1$.
The identity $p_{2j}=p_j^2$ then gives $p_j=0$ for
$1\le j\le v-1$: write each such $j$ as a power of two times an odd
integer and apply the identity repeatedly. Also, define
$p_0=\sum_{i\in S}1$. Since $v$ is even, $p_0=0$ in $\F_{2^m}$.

Writing $S=\{i_1,\ldots,i_v\}$, these equations say
\[
 \begin{pmatrix}
  1&\cdots&1\\
  x_{i_1}&\cdots&x_{i_v}\\
  \vdots&&\vdots\\
  x_{i_1}^{v-1}&\cdots&x_{i_v}^{v-1}
 \end{pmatrix}
 \begin{pmatrix}1\\ \vdots\\1\end{pmatrix}=0.
\]
The matrix is a Vandermonde matrix with determinant
$\prod_{p<q}(x_{i_q}-x_{i_p})\ne0$, because the $x_i$ are distinct.
It cannot annihilate the nonzero all-one vector, a contradiction.
Expanding the $h$ coordinates in a binary basis of $\F_{2^m}$ gives the
required vectors in $\F_2^{hm}$ and preserves all subset sums.
\end{proof}

\subsection{Constructing linear codes from vertex vectors}

In this subsection, we prove that if binary vectors
assigned to the vertices have nonzero sums over
every subset of size $s$ or $t$, then these vectors
can be used to construct a binary linear
$K_{s,t}$-code.

\begin{lemma}[Linear code construction]
\label{lem:field}
Let $n,s,t,r\ge1$, and suppose that vectors
$w_1,\ldots,w_n\in\F_2^r$ satisfy
\[
 \sum_{i\in S}w_i\ne0
 \qquad\text{whenever }S\subseteq[n]\text{ and }|S|\in\{s,t\}.
\]
Then there is a binary linear $K_{s,t}$-code of codimension at most
$r$ and density at least $2^{-r}$.
\end{lemma}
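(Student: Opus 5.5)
The plan is to identify $\F_2^r$ with the additive group of the field $\F_{2^r}$ and to use product labels, exactly as in the proof overview. Fix an $\F_2$-linear bijection $\phi:\F_2^r\to\F_{2^r}$, which exists because $\F_{2^r}$ is an $r$-dimensional vector space over $\F_2$. Put $z_i=\phi(w_i)$. Since $\phi$ is additive and injective, the hypothesis transfers to the field:
\[
\sum_{i\in S}z_i=\phi\Bigl(\sum_{i\in S}w_i\Bigr)\ne0
\]
whenever $|S|\in\{s,t\}$.

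Next, label each edge $\{i,j\}$ with $c_{ij}=z_iz_j\in\F_{2^r}$. The label does not depend on the order of $i$ and $j$ because multiplication is commutative. For disjoint $A,B$ with $|A|=s$ and $|B|=t$, every edge of $K_{A,B}$ joins exactly one $i\in A$ to exactly one $j\in B$, and each such pair gives exactly one edge. The syndrome therefore factors:
\[
\sigma(K_{A,B})=\sum_{i\in A}\sum_{j\in B}z_iz_j=\Bigl(\sum_{i\in A}z_i\Bigr)\Bigl(\sum_{j\in B}z_j\Bigr).
\]
Both factors are nonzero, since $|A|=s$ and $|B|=t$. A field has no zero divisors, so the product is nonzero.

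Finally, apply Lemma~\ref{lem:syndrome} in its field-valued form, as noted after that lemma: choosing a binary basis of $\F_{2^r}$ turns $\sigma$ into $r$ binary parity checks. The kernel of $\sigma$ is then a binary linear $K_{s,t}$-code of codimension at most $r$ and density at least $2^{-r}$.

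No step here is a real obstacle. The one point that needs care is that the bijection $\phi$ must be additive, so that the subset-sum hypothesis carries over to the field, and that the edge labels must be multiplicative, which is what makes the syndrome factor.

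This construction also covers the case where exactly one part size is odd. No separate degree-based argument is required, provided one supplies vectors $w_i$ with nonzero sums on subsets of both sizes $s$ and $t$. Constructing such vectors is not part of this lemma; it is left to the later application.
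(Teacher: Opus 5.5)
Your proof is correct and follows the paper's argument essentially step for step. Both use an additive isomorphism $\phi:\F_2^r\to\F_{2^r}$, product edge labels $z_iz_j$, the factorization $\sigma(K_{A,B})=(\sum_{i\in A}z_i)(\sum_{j\in B}z_j)$, the absence of zero divisors in a field, and then Lemma~\ref{lem:syndrome}. Your closing remark about the case with one odd part lies outside the lemma. The paper handles that case with sum labels $z_i+z_j$, since Lemma~\ref{lem:BCH} only controls even subsets, but this does not affect your proof of the statement.
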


\begin{proof}
Choose an $\F_2$-linear isomorphism
$\phi:\F_2^r\longrightarrow\F_{2^r}$ and put $z_i=\phi(w_i)$.
Label edge $(i,j)$ by $z_i z_j$, and let $\sigma$ be the sum of these
fixed edge labels. For disjoint $A,B\subseteq[n]$ with $|A|=s$ and
$|B|=t$, distributivity gives
\begin{equation}\label{eq:factorization}
 \sigma(K_{A,B})
 =\left(\sum_{i\in A}z_i\right)
  \left(\sum_{j\in B}z_j\right).
\end{equation}
Both factors are nonzero by the hypothesis. Their product is nonzero
because it is taken in a field. Lemma~\ref{lem:syndrome} therefore
shows that $\ker\sigma$ is the required code.
\end{proof}

Only additivity is required of $\phi$; it need not preserve
coordinatewise multiplication.

\subsection{The linear construction}

We now use Lemma~\ref{lem:BCH} for the largest even part. Depending on
the parity of the other part, we add or multiply endpoint labels.

The following proves the lower bound of Theorem~\ref{thm:main}.

\begin{proposition}[Linear lower bound]\label{prop:linear}
Suppose that at least one of $s,t$ is even, and let $2h$ be the largest
even part size. For every $n\ge s+t$, there is a binary linear
$K_{s,t}$-code of codimension at most~$h\left\lceil\log_2 n\right\rceil$.

In particular,
\[
 d^{\lin}_{K_{s,t}}(n)\ge2^{-h\lceil\log_2 n\rceil}\ge \Omega_{s,t}(n^{-h}).
\]
\end{proposition}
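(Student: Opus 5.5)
We split into two cases according to the parity of the other part, and in each case use the vectors $b_1,\ldots,b_n\in\F_2^{hm}$ of Lemma~\ref{lem:BCH}, where $m=\lceil\log_2 n\rceil$ and $2h$ is the largest even part size. Note $n\ge s+t\ge2$, so Lemma~\ref{lem:BCH} applies. By symmetry of $K_{s,t}$ in its two parts we may write the forbidden graph as $K_{u,2h}$ with $u$ the other part size. Once a code of codimension at most $hm$ is produced, its density is at least $2^{-hm}\ge 2^{-h(\log_2 n+1)}=2^{-h}n^{-h}$. This gives the stated $\Omega_{s,t}(n^{-h})$ bound.

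\emph{Case 1: $u$ odd.} We label edge $(i,j)$ by $c_{ij}=b_i+b_j\in\F_2^{hm}$ and apply Lemma~\ref{lem:syndrome}. For disjoint $A,B$ with $|A|=u$ and $|B|=2h$, each vertex $i$ contributes $b_i$ once per incident edge of $K_{A,B}$. Hence
\[
\sigma(K_{A,B})=\sum_{i\in A}2h\,b_i+\sum_{j\in B}u\,b_j=\sum_{j\in B}b_j .
\]
Since $|B|=2h$ is positive, even and at most $2h$, Lemma~\ref{lem:BCH} makes this sum nonzero. Lemma~\ref{lem:syndrome} then yields codimension at most $hm$. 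The same computation covers the orientation where the roles of $s$ and $t$ are swapped, since only the multiset of part sizes matters.

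\emph{Case 2: $u$ even.} Then $u\le 2h$ by maximality of $2h$, and $u\ge2$. We apply Lemma~\ref{lem:field} with $w_i=b_i$ and $r=hm$. Its hypothesis requires $\sum_{i\in S}b_i\ne0$ whenever $|S|\in\{u,2h\}$. Both sizes are positive, even and at most $2h$, so Lemma~\ref{lem:BCH} supplies exactly this. Lemma~\ref{lem:field} then gives a binary linear $K_{s,t}$-code of codimension at most $hm$.

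There is no real obstacle here, since the heavy lifting is already done in Lemmas~\ref{lem:BCH} and~\ref{lem:field}. The one point needing care is Case~2: we must check that the smaller even part is still at most $2h$. This is why $2h$ is chosen as the \emph{largest} even part, and it lets a single family $b_i$ serve both parts simultaneously.
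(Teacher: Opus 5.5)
Your proof is correct and follows the paper's argument essentially verbatim. You reduce to $K_{u,2h}$, use the additive edge labels $b_i+b_j$ when $u$ is odd so that only the odd-degree part survives in the syndrome, and invoke Lemma~\ref{lem:field} with the BCH vectors when $u$ is even, using $2\le u\le 2h$. The differences are only cosmetic: you work directly in $\F_2^{hm}$ rather than first identifying the $b_i$ with field elements, and you make the estimate $2^{-hm}\ge 2^{-h}n^{-h}$ explicit.
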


\begin{proof}
Write $K_{s,t}=K_{u,2h}$, put $m=\lceil\log_2 n\rceil$, and take the
vectors from Lemma~\ref{lem:BCH}. Identify them additively with
$z_1,\ldots,z_n\in\F_{2^{hm}}$. This preserves the nonzero sums of
even subsets of size at most~$2h$.

If $u$ is odd, label edge $(i,j)$ by $z_i+z_j$. For disjoint sets $A,B$
with $|A|=u$ and $|B|=2h$, the vertices in $A$ have even degree and
those in $B$ have odd degree. The syndrome of $K_{A,B}$ is therefore
\[
 \sigma(K_{A,B})=\sum_{j\in B}z_j\ne0,
\]
where the inequality follows from Lemma~\ref{lem:BCH}.
Lemma~\ref{lem:syndrome} gives the required code.

If $u$ is even, then $2\le u\le2h$. Lemma~\ref{lem:BCH} guarantees
nonzero subset sums of both sizes $u$ and $2h$. Applying Lemma~\ref{lem:field} with $r=hm$,
we obtain a binary linear $K_{s,t}$-code
of codimension at most $hm$ and density
at least $2^{-hm}$.
\end{proof}

\begingroup
\small
\setlength{\bibsep}{5pt}
\bibliographystyle{plainnat}
\bibliography{reference}
\endgroup

\end{document}